\newtheorem{theorem}{Theorem}
\newtheorem{proposition}[theorem]{Proposition}
\newtheorem{lemma}[theorem]{Lemma}
\newtheorem{remark}[theorem]{Remark}
\newtheorem{example}[theorem]{Example}
\newtheorem{corollary}[theorem]{Corollary}
\renewcommand{\hom}{\textrm{Hom}}
\newcommand{\Pic}{\textrm{Pic}}
\newcommand{\rank}{\textrm{rank}}
\begin{document}
\baselineskip=15.5pt
\title[Picard groups of certain compact complex parallelizable manifolds]{Picard groups of certain compact complex parallelizable manifolds and related spaces}
\author[P. Biswas]{Pritthijit Biswas} 
\address{Chennai Mathematical Institute, H1 Sipcot IT Park, Siruseri, Kelambakkam 603103.}
\email{pritthijit@cmi.ac.in}
\author[P. Sankaran]{Parameswaran Sankaran
}
\email{sankaran@cmi.ac.in}
\subjclass[2010]{32M10; 32Q99}
\keywords{Picard group, complex semisimple Lie group, non-K\"ahler compact complex 
manifolds}
\thispagestyle{empty}
\date{}

\begin{abstract}
Let $G$ be a complex simply connected semisimple Lie group and let $\Gamma$ be a torsionless uniform irreducible lattice in $G$.  
Then $\Gamma\backslash G$ is a compact complex non-K\"ahler manifold whose tangent bundle is holomorphically 
trivial. In this note we compute the Picard group of $\Gamma\backslash G$ when $\rank(G)\geq 3$. 
When $\rank(G)\lneq 3$, we determine the group $Pic^0(\Gamma\backslash G)\subset Pic(\Gamma\backslash G)$ of topologically trivial holomorphic 
line bundles. 
 
When $\rank(G)\ge 2$, we also show that $Pic^0(P_\Gamma)$ is isomorphic to $Pic^0(Y)$ where $P_\Gamma$ is a 
$\Gamma\backslash G$-bundle associated to a principal $G$-bundle over a compact connected complex 
manifold $Y$, and, when $\rank(G)\ge 3$, we show that $Pic(Y)\to Pic(P_\Gamma)$ is injective 
with finite cokernel.  
\end{abstract}

\maketitle

\section{Introduction}\label{intro}

Let $G$ be a simply connected complex linear Lie group and $\Gamma\subset G$ be a uniform lattice in $G$ and let 
$M_\Gamma=\Gamma\backslash G$. 
Then $M_\Gamma$ is a compact homogeneous manifold which has the property that its tangent 
bundle is holomorphically trivial.   Special cases of such manifolds include complex tori.  A result of 
H.-C. Wang \cite[Corollary 2]{W} shows that 
when $G$ is non-abelian, $M_\Gamma$ is not a K\"ahler manifold. See also \cite{BR}.

Denote by $\Pic(X)$ the group of isomorphism classes of holomorphic line bundles over a connected complex manifold $X$ and by $\Pic^0(X)$ 
the subgroup of topologically trivial line bundles. Thus $\Pic(X)\cong H^1(X,\mathcal O^*_X)$ and $\Pic^0(X)$ is the kernel 
of the Chern class map $c_1: \Pic(X)\to H^2(X;\mathbb Z).$  When $X$ is compact and K\"ahler, $Pic^0(X)$ is a complex torus (diffeomorphic 
to a product of the circle group). 
 When $\Gamma \subset V$ is a lattice in an $n$-dimensional complex vector space,  
$X:=\Gamma\backslash V$ is a complex torus, 
$\Pic^0(X)$ is again an $n$-dimensional complex torus, namely the dual torus $X^\vee=V^\vee/\Gamma^\vee$ 
where 
$V^\vee $ is the space of conjugate linear functions $V\to \mathbb C$ and $ \Gamma^\vee=\{f\in V^\vee\mid \mathfrak{Im} f(\Gamma)\subset \mathbb Z\}$.   See \cite{mumford}. 
 
In this note, we will be concerned with the determination of the Picard group when $M_\Gamma=\Gamma\backslash G$ 
where $G$ is a simply connected complex semisimple Lie group and $\Gamma$ is a torsionless irreducible uniform lattice 
in $G$.  
We shall also consider compact connected complex manifolds which 
are holomorphically fibred by $M_\Gamma$. 
 Such holomorphic bundles arise naturally. Indeed, if $\pi: E\to B$ is a holomorphic principal 
$G$-bundle over $B$, where $G$ acts on the left of $E$, then $\Gamma\backslash E\to B$ is such an $M_\Gamma$-bundle over $B$.

Recall that the real rank of a linear connected real semisimple Lie group $G$, denoted $\rank_\mathbb R(G)$ is the maximum dimension  $r$ of a diagonalizable subgroup of $G$ isomorphic to $(\mathbb R^\times_{>0})^r $.    This definition is applicable even if $G$ is a complex semisimple Lie group, by regarding it as a real Lie group, ignoring the 
complex structure.  In this case, the real rank of $G$ equals the rank of $G$ viewed as a complex 
algebraic group, namely, the dimension of a 
maximal (algebraic) torus (isomorphic to $(\mathbb C^*)^r$) contained in $G$.  For example, 
when $G=SL(n,\mathbb C)$, we have $\rank_\mathbb RG=n-1$.  In view of this, we shall henceforth write $\rank(G)$ to mean $\rank_\mathbb R(G)$.   Any simply connected semisimple complex Lie group is a direct product of  simple Lie groups. Here, as is customary in the theory of Lie groups, a connected Lie group is {\it simple} if its only proper normal subgroups are discrete, which are necessarily contained in the centre of the Lie group.  The centre of any complex semisimple Lie group is finite.   
We refer the reader to \cite{V} for these and other basic facts 
about complex semisimple Lie groups, including their classification.   

A discrete subgroup $\Gamma$ in a semisimple Lie group $G$ with finitely many connected components is called a {\it lattice} if $M_\Gamma=\Gamma\backslash G$ 
has a $G$-invariant measure (for the right action of $G$ on $M_\Gamma$) with respect to which 
it has finite volume.\footnote{Some authors assume that $G$ has no compact factors (i.e. $G$ has no proper normal compact subgroups of positive dimension) while defining a lattice.}  If $M_\Gamma$ is compact, then $\Gamma$ is said to be {\it uniform} (or {\it cocompact}). By a well-known result of Borel and Harish-Chandra, any (real) semisimple Lie group with finitely many connected components has both uniform as well as non-uniform lattices. A lattice $\Gamma$ in $G$ is 
said to be {\it reducible} if there exists a finite cover $\pi:\tilde G\to G$ such that $\tilde G=\tilde G_0\times \tilde G_1$ 
where $\tilde G_i$ are non-compact subgroups of $\tilde G$ and a subgroup $\Gamma_0\subset \Gamma$ having finite index in $\Gamma$ such that $\pi^{-1}(\Gamma_0)$ is a product $\Lambda_0\times \Lambda_1$ where $\Lambda_j\subset \tilde G_j$ is a lattice for $j=0,1$. If $\Gamma$ is 
not reducible, then it is said to be {\it irreducible}. For example, any lattice in a simple Lie group is irreducible.
 The reader is referred to the book by Raghunathan \cite{raghunathan} for the definition of lattices in a general Lie group and standard facts concerning them.

\begin{theorem} \label{picardgroup}  Let $G$ be a simply connected complex semisimple Lie group and let $\Gamma$ be a torsionless irreducible uniform lattice in $G$.   Let $M_\Gamma:=\Gamma\backslash G$. Then: \\
(i) If $\rank(G)\ge 2$, then $Pic^0(M_\Gamma)=0$.  If $\rank(G)\ge 3$, then $Pic(M_{\Gamma}) \cong H^2(M_\Gamma;\mathbb Z)$ is a finite group. \\
(ii) If $\rank(G)=1$, then $Pic^0(M_\Gamma)\cong \mathbb C^r/\mathbb Z^r$, where $r=\rank (\Gamma/[\Gamma,\Gamma])$.

\end{theorem}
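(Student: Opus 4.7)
The exponential sheaf sequence $0\to\mathbb Z\to\mathcal O_{M_\Gamma}\to\mathcal O_{M_\Gamma}^*\to 0$ yields the long exact sequence
\[
H^1(M_\Gamma,\mathbb Z)\xrightarrow{\iota} H^1(M_\Gamma,\mathcal O)\to Pic(M_\Gamma)\xrightarrow{c_1} H^2(M_\Gamma,\mathbb Z)\to H^2(M_\Gamma,\mathcal O),
\]
so $Pic^0(M_\Gamma)=\mathrm{coker}(\iota)$, and $Pic(M_\Gamma)\cong H^2(M_\Gamma,\mathbb Z)$ as soon as $\iota$ is surjective and $H^2(M_\Gamma,\mathcal O)=0$. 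The proof thus reduces to computing $H^1(M_\Gamma,\mathcal O)$ together with the map $\iota$ in every rank, and, when $\rank(G)\ge 3$, also $H^2(M_\Gamma,\mathcal O)$ and the Betti number $b_2(M_\Gamma)$.

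\textbf{Dolbeault computation via parallelizability.} Left-invariant holomorphic forms trivialize the holomorphic cotangent bundle of $M_\Gamma$, giving $\Omega^p_{M_\Gamma}\cong\Lambda^p\mathfrak g^*\otimes_{\mathbb C}\mathcal O_{M_\Gamma}$ and hence $H^q(M_\Gamma,\Omega^p)\cong\Lambda^p\mathfrak g^*\otimes H^{0,q}(M_\Gamma)$. Feed this into the Fr\"olicher spectral sequence $E_1^{p,q}=H^q(M_\Gamma,\Omega^p)\Rightarrow H^{p+q}(M_\Gamma,\mathbb C)$; on the bottom row $E_1^{\bullet,0}=\Lambda^\bullet\mathfrak g^*$ the $d_1$-differential is precisely the Chevalley--Eilenberg differential of the complex semisimple Lie algebra $\mathfrak g$, so Whitehead's lemmas $H^1(\mathfrak g,\mathbb C)=H^2(\mathfrak g,\mathbb C)=0$ force $E_2^{1,0}=E_2^{2,0}=0$. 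Higher differentials out of $E_r^{0,1}$ land in $E_r^{r,2-r}$, which is zero for $r\ge 3$ and already-vanishing $E_2^{2,0}$ for $r=2$; tracking this carefully yields an isomorphism $H^1(M_\Gamma,\mathcal O)=H^{0,1}(M_\Gamma)\cong H^1(M_\Gamma,\mathbb C)$. Since $\pi_1(M_\Gamma)=\Gamma$, the right-hand side equals $\hom(\Gamma^{\mathrm{ab}},\mathbb C)\cong\mathbb C^r$, where $r=\rank(\Gamma/[\Gamma,\Gamma])$.

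\textbf{Conclusion of (i) for $\rank(G)\ge 2$ and of (ii).} When $\rank(G)\ge 2$, irreducibility of $\Gamma$ combined with Margulis' normal subgroup theorem (together with Kazhdan's property (T) handling the simple factors of rank $\ge 2$) forces $\Gamma/[\Gamma,\Gamma]$ to be finite, so $r=0$ and $H^1(M_\Gamma,\mathcal O)=0$; hence $Pic^0(M_\Gamma)=0$. When $\rank(G)=1$, i.e.\ $G=SL(2,\mathbb C)$, the map $\iota$ factors through $H^1(M_\Gamma,\mathbb Z)\hookrightarrow H^1(M_\Gamma,\mathbb R)=\mathbb R^r\subset H^1(M_\Gamma,\mathbb C)\cong H^1(M_\Gamma,\mathcal O)\cong\mathbb C^r$, identifying its image with the standard lattice $\mathbb Z^r\subset\mathbb R^r\subset\mathbb C^r$; therefore $Pic^0(M_\Gamma)\cong\mathbb C^r/\mathbb Z^r$ as claimed.

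\textbf{Conclusion of (i) for $\rank(G)\ge 3$ and main obstacle.} For the strengthened assertion one needs additionally $H^2(M_\Gamma,\mathcal O)=0$ and finiteness of $H^2(M_\Gamma,\mathbb Z)$. The plan is to extend the Fr\"olicher analysis one degree further---using $H^2(\mathfrak g,\mathbb C)=0$ and the structure of $H^3(\mathfrak g,\mathbb C)$ to control $H^{0,2}(M_\Gamma)$---and to combine this with the Leray--Serre spectral sequence for the $K$-bundle $M_\Gamma\to\Gamma\backslash G/K$ (with $K\subset G$ a maximal compact subgroup) and Matsushima's formula for cohomology of cocompact lattices: under $\rank(G)\ge 3$ the $(\mathfrak g,K)$-cohomological contributions in degree two vanish, giving $H^2(\Gamma,\mathbb R)=H^2(K,\mathbb R)=0$ and hence $H^2(M_\Gamma,\mathbb R)=0$. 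The principal obstacle is locating this precise rank threshold---that is, pinning down why unitary representations that contribute in rank two cease to contribute starting in rank three; this is the representation-theoretic heart of the argument. Once both vanishings are established, $H^2(M_\Gamma,\mathbb Z)$ is finitely generated and torsion, hence finite, and $c_1:Pic(M_\Gamma)\to H^2(M_\Gamma,\mathbb Z)$ is the desired isomorphism.
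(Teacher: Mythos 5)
Your overall skeleton (exponential sequence, trivialization of $\Omega^p_{M_\Gamma}$, finiteness of $\Gamma^{\mathrm{ab}}$ in higher rank) matches the paper, but the central step is not established. The claimed isomorphism $H^1(M_\Gamma,\mathcal O_{M_\Gamma})\cong H^1(M_\Gamma;\mathbb C)$ does not follow from the Fr\"olicher spectral sequence as you describe. First, you overlook the differential $d_1\colon E_1^{0,1}\to E_1^{1,1}=\mathfrak g^*\otimes H^{0,1}(M_\Gamma)$ (you only discuss the differentials out of $E_r^{0,1}$ for $r\ge 2$), so all the spectral sequence yields is $E_\infty^{0,1}=\ker d_1\subseteq H^{0,1}(M_\Gamma)$. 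Second, and more fundamentally, even after handling $d_1$ the Fr\"olicher sequence only gives the inequality $b_1\le h^{0,1}$ (an upper bound on the Betti number by the Hodge number), whereas the theorem needs the reverse: $h^{0,1}=0$ when $b_1=0$ in rank $\ge 2$, and $h^{0,1}=r$ exactly in rank $1$ --- if $h^{0,1}>r$ your own exact sequence would give $Pic^0(M_\Gamma)\cong\mathbb C^{h^{0,1}}/\mathbb Z^r$, not $\mathbb C^r/\mathbb Z^r$. This missing inequality is precisely the non-elementary input the paper takes from Akhiezer's theorem, $H^{0,q}(M_\Gamma)\cong H^q(\Gamma;\mathbb C)\otimes\Lambda^0(\mathfrak g)$, which computes the Dolbeault groups on the nose rather than bounding them.

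The same defect recurs in your treatment of the rank $\ge 3$ case: ``extending the Fr\"olicher analysis one degree further'' cannot bound $h^{0,2}$ from above, so the needed vanishing $H^2(M_\Gamma,\mathcal O_{M_\Gamma})=0$ again requires Akhiezer's identification $H^{0,2}(M_\Gamma)\cong H^2(\Gamma;\mathbb C)$. You are right that the remaining point is the vanishing of $H^2(\Gamma;\mathbb C)$ in rank $\ge 3$, but you leave it as an unresolved ``obstacle''; in the paper this is not re-proved but quoted from Borel--Wallach (Matsushima's isomorphism $H^q(\Gamma;\mathbb C)\cong H^q(\mathfrak g,K;\mathbb C)$ for $q<\rank(G)$), combined with the observation that for complex $G$ the compact dual is $K$ itself, so $H^q(\mathfrak g,K;\mathbb C)\cong H^q(K;\mathbb C)=0$ for $q=1,2$ since $K$ is semisimple and simply connected. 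To repair your argument, replace the Fr\"olicher computation of $H^{0,1}$ and $H^{0,2}$ by Akhiezer's theorem (or an equivalent computation of the Dolbeault cohomology of $\Gamma\backslash G$), and cite the Borel--Wallach vanishing rather than flagging it as open; the rest of your reductions then go through.
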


The value of $r$ in the rank $1$ case, namely when $G=SL(2,\mathbb C)$ can be arbitrarily large. See Remark 6(ii) below. 

We have not been able to determine the image of the Chern class map $Pic(M_\Gamma)\to H^2(M_\Gamma;\mathbb Z)$ in the case when $\rank (G)\le 2$.  
By the well-known classification theorem,  the simply connected complex semisimple Lie groups of rank $2$ are 
$SL(3,\mathbb C), SL(2,\mathbb C)\times SL(2,\mathbb C), Spin(5)$ and the exceptional Lie group $G_2$. When $\rank(G)=1$, $G$ is isomorphic to $SL(2,\mathbb C)$.  

Suppose that $Y$ is a connected compact complex manifold and $\pi:P\to Y$ is a principal holomorphic fibre bundle with 
fibre and structure group $G$ where $G$ is  a simply connected complex semisimple Lie group.   We regard $G$ as 
acting on the left of $P$.  Let $P_\Gamma\to Y$ be the associated $M_\Gamma=\Gamma\backslash G$-bundle where 
$\Gamma$ is an irreducible torsionless lattice in $G$.   Thus $P_\Gamma=\Gamma\backslash P$. Since $M_\Gamma$ is not a K\" ahler manifold, neither is $P_\Gamma$.

\begin{theorem}  \label{bundleversionofPic}  
Let $\Gamma, G$ be as in Theorem \ref{picardgroup} and suppose that $\textrm{rank}(G)\ge 2$.   With the above notations, $\pi:P_\Gamma \to Y$ induces an isomorphism $\pi^*: \Pic^0(P_\Gamma)\cong \Pic^0(Y)$.  Moreover, 
if $\rank(G)\ge 3$,  then $\pi^*:Pic(Y)\to Pic(P_\Gamma)$ is injective and has finite cokernel.  
\end{theorem}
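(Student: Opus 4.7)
The plan is to use the Leray spectral sequences of $\pi:P_\Gamma\to Y$ with coefficients in $\bz$, $\co$, and $\co^*$, starting from the cohomological data about $M_\Gamma$ provided by Theorem~\ref{picardgroup}. As a preliminary input, when $\rank(G)\ge 2$ we have $H^1(M_\Gamma,\bz)=0$: since $G$ is simply connected, $\pi_1(M_\Gamma)=\Gamma$, so $H^1(M_\Gamma,\bz)=\hom(\Gamma/[\Gamma,\Gamma],\bz)$, and a higher-rank irreducible lattice has finite abelianization by Margulis' normal subgroup theorem. Combined with $\Pic^0(M_\Gamma)=0$ from Theorem~\ref{picardgroup}(i) and the exponential sequence on $M_\Gamma$, this forces $H^1(M_\Gamma,\co_{M_\Gamma})=0$ as well.

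Since the fibres of $\pi$ are compact and connected, $\pi_*\co^*_{P_\Gamma}=\co^*_Y$, $\pi_*\co_{P_\Gamma}=\co_Y$, and $\pi_*\bz_{P_\Gamma}=\bz_Y$. Over a Stein neighborhood $U\subset Y$ trivializing $\pi$ as $U\times M_\Gamma$, a Leray computation on the projection $U\times M_\Gamma\to U$, using $H^1(U,\co)=0$ together with the vanishings above, yields $H^1(U\times M_\Gamma,\co)=0=H^1(U\times M_\Gamma,\bz)$; hence the stalks of $R^1\pi_*\co$ and $R^1\pi_*\bz$ vanish, so both sheaves are zero. The degree-one five-term Leray sequences then produce isomorphisms $\pi^*:H^1(Y,\co_Y)\xrightarrow{\sim}H^1(P_\Gamma,\co)$ and $\pi^*:H^1(Y,\bz)\xrightarrow{\sim}H^1(P_\Gamma,\bz)$. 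A five-lemma argument applied to the induced commutative diagram of exponential sequences on $Y$ and on $P_\Gamma$ then gives the desired isomorphism $\pi^*:\Pic^0(Y)\xrightarrow{\sim}\Pic^0(P_\Gamma)$, proving the first assertion.

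Suppose now $\rank(G)\ge 3$, so that $\Pic(M_\Gamma)\cong H^2(M_\Gamma,\bz)$ is finite by Theorem~\ref{picardgroup}(i). The five-term sequence of Leray for $\co^*$ reads
\[
0\to\Pic(Y)\to\Pic(P_\Gamma)\to H^0(Y,R^1\pi_*\co^*)\to H^2(Y,\co^*_Y).
\]
To identify the stalks of $R^1\pi_*\co^*$, over a Stein trivializing $U\subset Y$ one computes $\Pic(U\times M_\Gamma)$ from the exponential sequence on $U\times M_\Gamma$: the vanishings $H^1(U\times M_\Gamma,\co)=0$ and $H^2(U,\co)=0$ for $U$ Stein, together with the K\"unneth identification $H^2(U\times M_\Gamma,\bz)\cong H^2(M_\Gamma,\bz)$, show that $\Pic(U\times M_\Gamma)\cong H^2(M_\Gamma,\bz)=\Pic(M_\Gamma)$, a finite group. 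Passing to the limit shows the stalk of $R^1\pi_*\co^*$ at each $y\in Y$ equals $\Pic(M_\Gamma)$, so the sheaf is locally constant with finite stalk. Since $Y$ is connected, $H^0(Y,R^1\pi_*\co^*)$ embeds into this stalk and is therefore finite; the exact sequence above then yields injectivity of $\pi^*:\Pic(Y)\to\Pic(P_\Gamma)$ and finiteness of its cokernel.

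The step requiring the most care is the stalk computation: the vanishings of the various $R^1\pi_*$ sheaves and the local Picard identification $\Pic(U\times M_\Gamma)\cong\Pic(M_\Gamma)$ rest on local Leray and K\"unneth analyses over Stein trivializing opens, drawing systematically on the cohomology of $M_\Gamma$ from Theorem~\ref{picardgroup}; the remaining diagram chases and applications of five-term sequences are routine.
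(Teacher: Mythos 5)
Your overall strategy is genuinely different from the paper's and, once one step is repaired, it works. The paper computes $H^q(P_\Gamma;\mathcal O_{P_\Gamma})$ for $q=1,2$ globally via the Borel spectral sequence (after checking, via Theorem \ref{akhiezer}, that the connected structure group $G$ acts trivially on $H^*(M_\Gamma;\mathcal O_{M_\Gamma})$), computes $H^q(P_\Gamma;\mathbb Z)$ via the Serre spectral sequence, and then chases the exponential diagram; the finite-cokernel statement is extracted from the finiteness of the cokernel of $H^2(Y;\mathbb Z)\to H^2(P_\Gamma;\mathbb Z)$ together with a Chern-class argument on $L^m$. You instead run Leray for $\mathbb Z$, $\mathcal O$ and $\mathcal O^*$; your treatment of the $\rank(G)\ge 3$ case through the five-term sequence for $\mathcal O^*$, with $R^1\pi_*\mathcal O^*_{P_\Gamma}$ locally constant with finite stalk $\Pic(M_\Gamma)$, is arguably cleaner than the paper's, since injectivity and finiteness of the cokernel drop out of a single exact sequence.

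The one genuine gap is the linchpin vanishing $H^1(U\times M_\Gamma,\mathcal O)=0$, equivalently $R^1\pi_*\mathcal O_{P_\Gamma}=0$. As written, your justification is circular: in the Leray spectral sequence of $p:U\times M_\Gamma\to U$ the term $E_2^{0,1}=H^0(U,R^1p_*\mathcal O)$ has stalks that are direct limits of the very groups $H^1(V\times M_\Gamma,\mathcal O)$ you are trying to compute, and the hypothesis $H^1(U,\mathcal O)=0$ only kills $E_2^{1,0}$. You need a real input here: either Grauert's direct image theorem plus base change (since $\pi$ is proper and $h^q(\pi^{-1}(y),\mathcal O)$ is constant in $y$, the sheaf $R^q\pi_*\mathcal O_{P_\Gamma}$ is locally free of rank $h^q(M_\Gamma,\mathcal O_{M_\Gamma})$, hence zero for $q=1$ when $\rank(G)\ge 2$), or a K\"unneth theorem for coherent cohomology of a product of a Stein space with a compact complex manifold, or the Borel spectral sequence as in the paper. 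Two smaller points: for the identification $H^2(U\times M_\Gamma;\mathbb Z)\cong H^2(M_\Gamma;\mathbb Z)$ you should shrink $U$ to a contractible (e.g.\ polydisc) trivializing open, since Stein does not imply topologically trivial; and your derivation of $H^1(M_\Gamma,\mathcal O_{M_\Gamma})=0$ from Theorem \ref{picardgroup}(i) is correct but roundabout --- it is immediate from Theorem \ref{akhiezer} together with the vanishing of $H^1(\Gamma;\mathbb C)$, which is how the paper obtains it.
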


 Elliptic curve bundles with non-K\"ahler total spaces first appeared in the work of H. Hopf \cite {hopf} who showed that $\mathbb S^1\times \mathbb S^{2n-1}$ admits a complex structre. Since the work of 
Calabi and Eckmann \cite{ce} who obtained elliptic bundle structrures 
on product of two odd dimensional spheres, many researchers have constructed new classes of compact non-K\"ahler  complex manifolds and studied their geometry.    We point out but only a few:  A. Blanchard, \cite{blanchard}, C. Borcea \cite{borcea}, T. H\"ofer \cite{hofer},  V. Br\^{i}nz\u{a}nescu \cite{brinzanescu}, V. Ramani and P. Sankaran \cite{ramani-sankaran},  
S. L\'opez de Madrano and A. Verjovsky \cite{lv}, 
J. J. Loeb and M. Nicolau \cite{ln}, L. Meersseman \cite{meersseman},  Meersseman and Verjovsky \cite{mv}, 
Sankaran and A. S. Thakur \cite{st}, and M. Poddar and Thakur \cite{poddar-thakur}.   The reader is referred to 
the memoir  by J. Winkelmann \cite{Wi} for a study of the geometry of complex parallelizable manifolds.

Proofs of both Theorems \ref{picardgroup} and \ref{bundleversionofPic} uses the exponential exact sequence and 
the description of the $\bar\partial$-cohomology of $\Gamma\backslash G$ as a $G$-module due to Akhiezer \cite{akhiezer2}, and a criterion for the vanishing  
of $H^q(\Gamma;\mathbb C)$ (\cite[\S4, Ch. VII]{borel-wallach}) when $q\le 2$.  (These results 
will be recalled in the sequel.)  In addtion, proof of Theorem \ref{bundleversionofPic} 
uses the Borel spectral sequence \cite[Appendix II]{hirzebruch}, which we shall recall in \S \ref{borelss}. 
Theorem \ref{picardgroup} will be proved in \S2 and  Theorem \ref{bundleversionofPic},  
in \S 3.

\section{Cohomology of $\Gamma\backslash G$}
Let $G$ be a connected real semisimple linear Lie group with finite centre and with no compact factors.  Let $r=\rank(G)$.
 
Let $K\subset G$ be a maximal compact subgroup of $G$.  Then $X= G/K$ is a globally symmetric space diffeomorphic 
to a cell.   Let $\Gamma$ be a uniform lattice in $G$ which 
is torsionless.   
Then $\Gamma 
$ acts on $G/K$ freely and properly discontinuously 
and the locally symmetric space $X_\Gamma:=\Gamma\backslash G/K$ is a smooth connected 
compact manifold which is an Eilenberg-MacLane complex $K(\Gamma,1)$.  
Denote by $\mathfrak g$ (resp. $\mathfrak k$) the Lie algebra of $G$ (resp. $K$).   One has the Cartan decomposition $\mathfrak g=\mathfrak k\oplus \mathfrak p$ where $\mathfrak p=\mathfrak k^\perp$, the orthogonal complement with respect to the Killing form on $\mathfrak g$.  Denote by $\mathfrak p_\mathbb C$ the 
complexification $\mathfrak p\otimes \mathbb C$ of the vector space $\mathfrak p$.  Then $\mathfrak p_\mathbb C$ is a complex representation space for $K$ (given by the adjoint action).  

Let $C^\infty(\Gamma\backslash G)$ denote the complex vector space of smooth complex valued functions on $\Gamma\backslash G$ and let 
$L^2(\Gamma\backslash G)$ denote the complex Hilbert space of square integrable functions on $\Gamma\backslash G$ with respect to the $G$-invariant measure obtained from the Haar measure on $G$.  The translation action 
(on the right) of $G$ on $\Gamma\backslash G$ induces an action of $G$ on $L^2(\Gamma\backslash G)$ making 
the latter a representation space.  Since the measure on $\Gamma\backslash G$ is $G$-invariant, the $G$ representation on $L^2(\Gamma\backslash G)$ is unitary. 
 
The cohomology $H^*(\Gamma;\mathbb C):=H^*(X_\Gamma;\mathbb C)$ has been described by Matsushima 
\cite{matsushima} 
as the relative Lie algebra cohomology $H^*(\mathfrak g,  K;C^\infty (\Gamma\backslash G)_{(K)})$ 
when $\Gamma$ is a uniform lattice.   See also 
\cite{mm}, \cite[Chapter VII]{borel-wallach}.
Here $C^\infty(\Gamma\backslash G)_{(K)}\subset C^\infty(\Gamma\backslash G)\subset L^2(\Gamma\backslash G)$ denotes the $(\mathfrak g,K)$-module of smooth $K$-finite vectors of  $L^2(\Gamma\backslash G)$.  More precisely, 
by a theorem of Gelfand and Piatetsky-Shapiro \cite[p.23]{ggp}, the 
Hilbert space $L^2(\Gamma\backslash G)$ decomposes as a Hilbert direct sum  $\hat{ \oplus} m(\Gamma,\pi) H_\pi$ of certain irreducible unitary 
representations $(\pi,H_\pi)$ of $G$ occurring with {\it finite} multiplicity $m(\Gamma,\pi)$.   We denote by $V_{\sigma ,(K)}$ the space of all smooth $K$-finite vectors of a unitary representation $(\sigma,V_\sigma)$ of $G$ on a Hilbert space.   
Then $V_{\sigma ,(K)}$ is a $(\mathfrak g,K)$-module.  
We are ready to state the result of 
Matsushima.

\begin{theorem}\label{matsushima} {\em (\cite{matsushima},\cite{mm})}With the above notations, the cohomology
of the compact locally symmetric space $X_\Gamma=\Gamma\backslash G/K$ has the following description. 
\begin{equation}
H^r(X_\Gamma;\mathbb C)\cong H^r(\Gamma;\mathbb C)\cong H^r(\mathfrak g,K;L^2(\Gamma\backslash G)_{(K)})\cong \bigoplus m(\Gamma,\pi) H^r(\mathfrak g,K;H_{\pi,(K)})
\end{equation}
where the sum is over all isomorphism classes of irreducible unitary representations $\pi$ of $G$ that occur in $L^2(\Gamma\backslash G)$ 
with  positive multiplicity $m(\Gamma,\pi)$.   
\end{theorem}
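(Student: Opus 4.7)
The strategy is to chain together three classical ingredients: (a) the $K(\Gamma,1)$ property of $X_\Gamma$, (b) a de Rham-type identification of the cohomology of $X_\Gamma$ with relative Lie algebra cohomology of $(\mathfrak g, K)$ with coefficients in the smooth functions on $\Gamma\backslash G$, and (c) the Gelfand--Piatetsky-Shapiro decomposition of $L^2(\Gamma\backslash G)$.

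First I would establish $H^r(X_\Gamma;\mathbb C)\cong H^r(\Gamma;\mathbb C)$. Since $G/K$ is diffeomorphic to a cell, hence contractible, and $\Gamma$ acts on $G/K$ freely and properly discontinuously, the quotient $X_\Gamma=\Gamma\backslash G/K$ is an Eilenberg--MacLane space $K(\Gamma,1)$ as noted in the text. The first isomorphism is then the standard identification of singular cohomology with complex coefficients with group cohomology of the fundamental group.

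Next I would identify $H^*(X_\Gamma;\mathbb C)$ with $H^*(\mathfrak g,K;C^\infty(\Gamma\backslash G))$. Using the Cartan decomposition $\mathfrak g=\mathfrak k\oplus\mathfrak p$, right-translation by $G$ on $\Gamma\backslash G$ descending modulo $K$ trivializes the pull-back of the tangent bundle of $X_\Gamma$ with fibre $\mathfrak p$. Consequently, the smooth de Rham complex of $X_\Gamma$ can be identified with the Chevalley--Eilenberg complex
\begin{equation*}
C^*(\mathfrak g,K;C^\infty(\Gamma\backslash G))=\hom_K(\wedge^*\mathfrak p,C^\infty(\Gamma\backslash G)),
\end{equation*}
equipped with the standard relative Lie algebra cohomology differential. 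Taking cohomology and invoking the de Rham theorem yields the desired identification.

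The core step is then to replace $C^\infty(\Gamma\backslash G)$ with $L^2(\Gamma\backslash G)_{(K)}$ and to pass to the summation over irreducibles. Compactness of $\Gamma\backslash G$ (as $\Gamma$ is uniform) permits the use of Hodge theory: the Laplace--Beltrami operator on $\Omega^*(X_\Gamma)$ is elliptic and its harmonic representatives, when pulled back to $\Gamma\backslash G$, are smooth, $K$-finite and square-integrable. Combining elliptic regularity with Harish-Chandra's results relating smooth and $K$-finite vectors of admissible representations, the inclusion $L^2(\Gamma\backslash G)_{(K)}\hookrightarrow C^\infty(\Gamma\backslash G)$ induces an isomorphism on $(\mathfrak g,K)$-cohomology. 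Finally, the Gelfand--Piatetsky-Shapiro theorem gives a Hilbert direct sum decomposition $L^2(\Gamma\backslash G)\cong \hat\bigoplus m(\Gamma,\pi)H_\pi$ with finite multiplicities; passing to $K$-finite vectors turns this into an algebraic direct sum, and since $H^*(\mathfrak g,K;-)$ commutes with direct sums of coefficient modules, the last isomorphism of the theorem follows.

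The main obstacle will be the $C^\infty$-to-$L^2_{(K)}$ passage, because a Hilbert direct sum does not in general commute with the Lie algebra cohomology functor; one must use ellipticity of the Laplacian together with the admissibility of the occurring irreducibles to ensure that only the smooth $K$-finite vectors in each $H_\pi$ contribute and that the resulting algebraic direct sum correctly computes the cohomology.
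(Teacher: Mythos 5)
The paper offers no proof of this statement: it is quoted verbatim from Matsushima \cite{matsushima} and Matsushima--Murakami \cite{mm} (see also \cite[Ch.~VII]{borel-wallach}), so there is nothing internal to compare against. Your outline does reproduce the standard argument of those references: $X_\Gamma$ is a $K(\Gamma,1)$, the de Rham complex of $X_\Gamma$ is identified with $\hom_K(\Lambda^*\mathfrak p, C^\infty(\Gamma\backslash G))$ via the trivialization of the tangent bundle coming from the Cartan decomposition, and the Gelfand--Piatetsky-Shapiro decomposition is then fed into relative Lie algebra cohomology.

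One step, as literally written, is wrong and you should repair it: it is not true that ``passing to $K$-finite vectors turns the Hilbert direct sum into an algebraic direct sum.'' A given $K$-type can occur in infinitely many of the $H_\pi$, so the $\tau$-isotypic component of $\hat\oplus\, m(\Gamma,\pi)H_\pi$ is itself a Hilbert direct sum, and $L^2(\Gamma\backslash G)_{(K)}$ strictly contains $\bigoplus m(\Gamma,\pi)H_{\pi,(K)}$ in general. The correct bridge is Kuga's lemma: the Hodge Laplacian on $\hom_K(\Lambda^*\mathfrak p, H_{\pi,(K)})$ acts as $-\pi(C)$ for the Casimir element $C$, so $H^*(\mathfrak g,K;H_{\pi,(K)})=0$ unless $\pi(C)=0$; since $\Gamma\backslash G$ is compact the spectrum is discrete and only finitely many $\pi$ occurring in $L^2(\Gamma\backslash G)$ satisfy $\pi(C)=0$, so the harmonic forms live in a finite algebraic sub-sum and the convergence issue disappears. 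You flag this as ``the main obstacle'' and name ellipticity and admissibility, which is the right neighborhood, but the Casimir eigenvalue argument is the specific mechanism that makes the reduction legitimate; without it the last isomorphism is not justified.
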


Note that, since the left most side of (1) is a finite dimensional vector space, there are only finitely many representations $\pi$ with $m(\Gamma,\pi)>0$ with non-vanishing $(\mathfrak g, K)$-cohomology.    Since $\Gamma \backslash G$ has finite volume, the space $L^2(\Gamma\backslash G)$ contains the 
trivial representation $\mathbb C$ arising as the space of all locally constant functions on $\Gamma\backslash G$.     
Since $G$ is connected, so is $\Gamma\backslash G$ and hence $m(\Gamma,\mathbb C)=1$.   The homomorphism $j^*:H^*(\mathfrak g,K;\mathbb C)\to H^*(\Gamma;\mathbb C)$ is referred to as the {\it Matsushima homomorphism}.

One has 
the {\it compact dual} $X_u$ of the globally symmetric space $X=G/K$ (where $G$ is any non-compact {\it real} semisimple Lie group).   The space $X_u$ is the compact Riemannian  symmetric space $U/K$, where $U$ is the maximal compact subgroup of the complexification of $G$  that 
contains $K$.  Its Lie algebra $\mathfrak u$ equals $\mathfrak k\oplus i\mathfrak p\subset \mathfrak g_\mathbb C=\mathfrak g \oplus i\mathfrak g$ where $\mathfrak g=\mathfrak k \oplus \mathfrak p$ is the Cartan decomposition corresponding to $K$.
It is well-known that $H^*(\mathfrak g, K;\mathbb C)\cong \textrm{Hom}_K(\Lambda ^*\mathfrak p,\mathbb C)\cong H^*(X_u;\mathbb C)$ (See \cite[\S3, Chapter II]{borel-wallach}).   

We shall now describe the compact dual $X_u$ when $G$ is a simply connected {\it complex} semisimple Lie group.  
  
Since $G$ is a complex Lie group regarded as a real Lie group, we have $\mathfrak p=i\mathfrak k$ and so 
 $\mathfrak u:=\mathfrak k \oplus i\mathfrak p\cong \mathfrak k\times \mathfrak k$.   
Since $\mathfrak u=\mathfrak k\times \mathfrak k$, it follows that $U\cong K\times K$.

Moreover, $K$ is imbedded diagonally in $U\cong K\times K$ and we have $U/K=K\times K/K\cong K$.    See \cite{helgason} for further details.    Therefore $X_u\cong K$ and 
we have $H^*(\mathfrak g,K;\mathbb C)=H^*(X_u;\mathbb C)\cong H^*(K;\mathbb C)$. The following result is an immediate consequence. 
\begin{proposition}
Let $G$ be a connected complex semisimple Lie group and $\Gamma$ be a torsionless uniform lattice in $G$. Then the Matsushima homomorphism 
defines an injective homomorphism $H^*(K;\mathbb C)\to H^*(\Gamma;\mathbb C)$.\hfill $\Box$
\end{proposition}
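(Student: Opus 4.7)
The plan is to deduce injectivity directly from the Matsushima decomposition recalled in Theorem \ref{matsushima}, exploiting the fact that the trivial representation contributes exactly one summand corresponding to $H^*(\mathfrak{g},K;\mathbb{C})$.

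First, I would recall that the trivial representation $\mathbb{C}$ of $G$ sits inside $L^2(\Gamma\backslash G)$ as the subspace of locally constant functions, and since $\Gamma\backslash G$ is connected with finite volume, we have $m(\Gamma,\mathbb{C})=1$. The inclusion $\mathbb{C}\hookrightarrow L^2(\Gamma\backslash G)_{(K)}$ is a direct summand as $(\mathfrak{g},K)$-modules, because the Gelfand--Piatetsky-Shapiro decomposition presents $L^2(\Gamma\backslash G)_{(K)}$ as a (Hilbert) direct sum of irreducibles in which $\mathbb{C}$ occurs with multiplicity one. Applying the functor $H^*(\mathfrak{g},K;-)$, which commutes with direct sums (and, after passing to cohomology, reduces effectively to a finite sum since only finitely many $\pi$ contribute non-trivially), I obtain
\[
H^*(\mathfrak{g},K;L^2(\Gamma\backslash G)_{(K)})\ \cong\ H^*(\mathfrak{g},K;\mathbb{C})\ \oplus\ \bigoplus_{\pi\ne \mathbb{C}} m(\Gamma,\pi)\,H^*(\mathfrak{g},K;H_{\pi,(K)}),
\]
and the Matsushima homomorphism $j^*:H^*(\mathfrak{g},K;\mathbb{C})\to H^*(\Gamma;\mathbb{C})$ is precisely the inclusion of the first summand. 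In particular $j^*$ is injective.

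Next I would invoke the identification, discussed in the paragraphs just before the statement, that when $G$ is a connected complex semisimple Lie group, the compact dual $X_u$ is diffeomorphic to $K$, whence
\[
H^*(\mathfrak{g},K;\mathbb{C})\ \cong\ H^*(X_u;\mathbb{C})\ \cong\ H^*(K;\mathbb{C}).
\]
Composing this isomorphism with $j^*$ yields the desired injective homomorphism $H^*(K;\mathbb{C})\hookrightarrow H^*(\Gamma;\mathbb{C})$.

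The only subtlety worth flagging is the compatibility between the abstract relative Lie algebra cohomology decomposition and the geometric realization of $j^*$: one must verify that the splitting afforded by the locally constant functions is indeed the one pulled back from $H^*(X_u;\mathbb{C})$ under the Matsushima isomorphism. This, however, is standard and is recorded in the references \cite{matsushima}, \cite{mm}, and \cite[Ch.~VII]{borel-wallach} already cited. Once this compatibility is granted, the proof is essentially a one-line extraction of a direct summand, so I do not anticipate a genuine obstacle.
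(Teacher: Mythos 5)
Your proposal is correct and follows the same route the paper intends: the paper presents this Proposition as an immediate consequence of Theorem \ref{matsushima} together with the identification $H^*(\mathfrak g,K;\mathbb C)\cong H^*(X_u;\mathbb C)\cong H^*(K;\mathbb C)$ coming from the compact dual $X_u\cong K$. You merely make explicit the (standard) reason $j^*$ is injective, namely that the trivial representation occurs with multiplicity one in $L^2(\Gamma\backslash G)$ and so contributes $H^*(\mathfrak g,K;\mathbb C)$ as a direct summand of $H^*(\Gamma;\mathbb C)$.
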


We have the following vanishing theorem.  

\begin{theorem} \label{vanishingtheorem} {\em (Cf. \cite[Corollary 4.4(b), Chapter VII] {borel-wallach})}
Let $\Gamma$ be an irreducible uniform lattice in a simply connected  complex semisimple Lie group $G$.  
Then the Matsushima homomorphism 
$j^*:H^q(\mathfrak g,K;\mathbb C)\to H^q(\Gamma;\mathbb C)$ is an isomorphism for $
1\le q< \rank(G)$.   \hfill $\Box$ 
\end{theorem}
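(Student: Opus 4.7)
The strategy is to combine the Matsushima decomposition of $H^q(\Gamma;\mathbb{C})$ with the Borel--Wallach vanishing theorem for $(\mathfrak{g},K)$-cohomology of non-trivial unitary representations, isolating the contribution of the trivial representation in low degrees.

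Concretely, Theorem \ref{matsushima} gives
$$H^q(\Gamma;\mathbb{C}) \cong \bigoplus_{\pi} m(\Gamma,\pi)\, H^q(\mathfrak{g},K;H_{\pi,(K)}),$$
where $\pi$ ranges over isomorphism classes of irreducible unitary representations occurring in $L^2(\Gamma\backslash G)$. The trivial representation $\pi_0=\mathbb{C}$ appears with multiplicity one, by connectedness of $\Gamma\backslash G$, and its summand $H^q(\mathfrak{g},K;\mathbb{C})$ is precisely the image of the Matsushima map $j^*$. Thus, to prove the claimed isomorphism for $1\le q<\rank(G)$, it suffices to show that every non-trivial irreducible $\pi$ with $m(\Gamma,\pi)>0$ satisfies $H^q(\mathfrak{g},K;H_{\pi,(K)})=0$ in that range.

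For this step I would invoke \cite[Cor.~4.4(b), Ch.~VII]{borel-wallach}, which gives precisely this vanishing for irreducible uniform lattices in a simply connected complex semisimple $G$. The cut-off $\rank(G)$ reflects the structure of the symmetric space for complex $G$: since $\mathfrak{p}=i\mathfrak{k}$, the compact dual satisfies $U\cong K\times K$ and $X_u\cong K$, so the Kumaresan--Vogan--Zuckerman invariant controlling the minimal degree of non-zero $(\mathfrak{g},K)$-cohomology of a non-trivial irreducible unitary representation coincides with the complex rank of $G$. The irreducibility of $\Gamma$ is crucial here: it excludes representations factoring through projections $G\to G_i$ in a nontrivial product decomposition $G=G_1\times\cdots\times G_k$, which for reducible lattices could produce classes in degrees below $\rank(G)$ coming from the trivial factor.

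The principal technical obstacle, were one to derive Borel--Wallach's cut-off from scratch rather than cite it, would be establishing this cohomological lower bound for unitary representations. This rests on the Vogan--Zuckerman classification of unitary $(\mathfrak{g},K)$-modules with non-vanishing cohomology, together with the use of irreducibility of $\Gamma$ to rule out the external tensor products whose smallest cohomological degree is controlled by a proper factor. Granting the cited vanishing, the Matsushima decomposition collapses in the claimed range to the single trivial summand, yielding $j^*: H^q(\mathfrak{g},K;\mathbb{C})\xrightarrow{\;\cong\;} H^q(\Gamma;\mathbb{C})$ for $1\le q<\rank(G)$.
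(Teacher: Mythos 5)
Your proposal is correct and follows essentially the same route as the paper, which offers no independent argument but simply cites \cite[Ch.~VII, \S4, Corollary 4.4(b)]{borel-wallach}; your reduction via the Matsushima decomposition to the vanishing of $H^q(\mathfrak g,K;H_{\pi,(K)})$ for non-trivial $\pi$ in degrees $q<\rank(G)$ is exactly the content of that cited corollary. The only caveat is that your identification of the cut-off with the rank via the Kumaresan--Vogan--Zuckerman invariant is itself part of what Borel--Wallach establish for complex groups, so it should be attributed to the citation rather than asserted independently.
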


A more refined version, applicable for connected real semisimple Lie groups with finite centre and without compact factors, is proved in \cite[Chapter VII, \S 4, Corollary 4.4(b)]{borel-wallach}.  
As an immediate consequence we obtain the following.

\begin{corollary}\label{cor-vanishing}  
If $\rank(G)\ge 2$, then $H^1(\Gamma;\mathbb C)=0$.  If $\rank(G)\ge 3$, then $H^2(\Gamma;\mathbb C)=0$. 
\end{corollary}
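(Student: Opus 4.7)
The plan is to reduce this vanishing statement to a direct application of Theorem \ref{vanishingtheorem} together with the preceding identification $H^*(\mathfrak{g},K;\mathbb{C})\cong H^*(K;\mathbb{C})$. Since the range $1\le q<\rank(G)$ in Theorem \ref{vanishingtheorem} contains $q=1$ when $\rank(G)\ge 2$ and contains $q=1,2$ when $\rank(G)\ge 3$, all that remains is to show $H^1(K;\mathbb{C})=0$ and $H^2(K;\mathbb{C})=0$ for the maximal compact subgroup $K$ of $G$.

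For this second step, I would invoke the classical Hopf--Samelson structure theorem: the rational cohomology ring of a compact connected Lie group is an exterior algebra on primitive generators of odd degree. Because $G$ is complex semisimple and simply connected, $K$ is a compact real form of $G$ and is itself (compact) semisimple with no circle factors. Consequently every primitive generator lives in degree at least $3$ (the lowest being the transgression of a degree $4$ invariant polynomial, one for each simple factor). In particular $H^1(K;\mathbb{C})=0$ and $H^2(K;\mathbb{C})=0$, so $H^q(\mathfrak{g},K;\mathbb{C})=0$ for $q=1,2$.

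Putting the two ingredients together: if $\rank(G)\ge 2$ then Theorem \ref{vanishingtheorem} gives $H^1(\Gamma;\mathbb{C})\cong H^1(\mathfrak{g},K;\mathbb{C})=0$, and if $\rank(G)\ge 3$ we additionally get $H^2(\Gamma;\mathbb{C})\cong H^2(\mathfrak{g},K;\mathbb{C})=0$. This is the entirety of the argument: there is no genuine obstacle, since all the work has already been done by quoting Theorem \ref{vanishingtheorem} (the deep input, coming from \cite[Ch.~VII]{borel-wallach}) and the standard fact about cohomology of compact semisimple Lie groups. The only point to be careful about is that the range of degrees in the Matsushima isomorphism is strictly less than $\rank(G)$, which forces the hypotheses $\rank(G)\ge 2$ and $\rank(G)\ge 3$, respectively, rather than a weaker bound.
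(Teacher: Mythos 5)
Your proposal is correct and follows essentially the same route as the paper: both reduce the statement to Theorem \ref{vanishingtheorem} via the identification $H^q(\mathfrak g,K;\mathbb C)\cong H^q(X_u;\mathbb C)\cong H^q(K;\mathbb C)$ and then use that $K$ is compact semisimple (and simply connected) to kill $H^1$ and $H^2$. The paper states this last vanishing without elaboration, whereas you justify it via the Hopf--Samelson exterior-algebra structure; that is a fine (and standard) way to fill in the detail, the only quibble being that the degree-$3$ primitive generators transgress from the degree-$2$ invariant polynomial (the Killing form), i.e.\ from $H^4(BK)$, not from a ``degree $4$ invariant polynomial.''
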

\begin{proof}
 We merely note that,  under our restrictive  hypotheses, 
$K$ is semisimple and simply connected. Moreover,  since the compact dual $X_u$ is diffeomorphic to $ K$, it follows that $H^q(\mathfrak g,K;\mathbb C)\cong H^q(X_u;\mathbb C)=0$ if $q=1,2$ as $K$ is semisimple.  So 
our assertion follows from Theorem \ref{vanishingtheorem}.

\end{proof}

\begin{remark}\label{bettinumber}    (i) 
The vanishing of $H^1(\Gamma;\mathbb R)$ when $\Gamma$ is an irreducible lattice in any connected real semisimple Lie 
group and  having rank at least $3$ was first proved by Kazhdan \cite{kazhdan}.  The result was extended to rank $2$ groups by 
S. P. Wang and by B.  Kostant.  

(ii) In the case when $\rank(G)=1$, $G=SL(2,\mathbb C)$and $X_\Gamma$ is a compact hyperbolic $3$-manifold with fundamental group 
$\Gamma$,  it is known that there are lattices $\Gamma$ such that $X_\Gamma$ has positive first Betti number.  
In fact, it was first shown by Millson \cite{millson} that  for any $n\ge 3$, there are uniform arithmetic lattices in $SO_0(1, n)$  whose first Betti numbers are arbitrarily large.  
\end{remark}

Let $M$ be a compact complex manifold.  As usual 
$H^q(M;\Omega^p_M)$ where $\Omega^p_M$ denotes the sheaf of holomorphic 
$p$-forms on $M$ will be denoted by $H^{p,q}(M)$.  Of course, $\Omega^0_M$ is the structure sheaf $\mathcal O_M$ of $M$.

{\it For the rest of this section, we assume that $\Gamma$ is a torsionless irreducible uniform lattice in 
a complex semisimple simply connected Lie group $G$.}

Since $G$ operates on (the right of) $M_\Gamma=\Gamma\backslash G$, it acts linearly on the finite dimensional complex vector space $H^{p,q}(M_\Gamma)$.   
The following theorem, which is in fact true for any connected reductive linear algebraic group $G$ over $\mathbb{C}$, which is due to D. N. Akhiezer, describes this $G$-representation.

\begin{theorem} \label{akhiezer} {\em(Akhiezer \cite{akhiezer2})}
  
Let $H^*(\Gamma;\mathbb C)$ be given the trivial $G$-module structure.  With the $G$-module structure on $\Lambda^*(\mathfrak g)$ arising from the adjoint action of $G$, we have an isomorphism 
\[ H^{p,q}(M_\Gamma)\cong H^q(\Gamma; \mathbb C)\otimes_\mathbb C\Lambda^p(\mathfrak g)\]
as $G$-modules for all $p,q\ge 0$. \hfill $\Box$

\end{theorem}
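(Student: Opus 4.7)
My plan is to exploit complex parallelizability to trivialise both $\Omega^p_{M_\Gamma}$ and the anti-holomorphic cotangent bundle, which will separate the $\Lambda^p(\mathfrak g)$-factor from the $(p,q)$-cohomology and reduce the problem to identifying $H^q(M_\Gamma, \mathcal{O}_{M_\Gamma})$ with $H^q(\Gamma; \mathbb C)$. Concretely, left-invariant holomorphic $(p,0)$-forms on $G$ descend to global sections of $\Omega^p_{M_\Gamma}$ and give a $G$-equivariant holomorphic trivialisation
\[
\Omega^p_{M_\Gamma} \;\cong\; \mathcal O_{M_\Gamma} \otimes_{\mathbb C} \Lambda^p(\mathfrak g^*),
\]
where the $G$-action on $M_\Gamma$ is by right-translation and the induced action on $\Lambda^p(\mathfrak g^*)$ is the coadjoint one. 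Since $\mathfrak g$ is semisimple, the Killing form gives a $G$-equivariant identification $\mathfrak g^*\cong\mathfrak g$, and hence a $G$-equivariant isomorphism
\[
H^{p,q}(M_\Gamma) \;\cong\; H^q(M_\Gamma,\mathcal O_{M_\Gamma}) \otimes_{\mathbb C} \Lambda^p(\mathfrak g).
\]
This reduces the statement to the case $p=0$ with trivial $G$-action on the $H^q(M_\Gamma,\mathcal O_{M_\Gamma})$ factor.

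Next, the same parallelisation gives $A^{0,q}(M_\Gamma) \cong C^\infty(M_\Gamma) \otimes \Lambda^q(\bar{\mathfrak g}^*)$ via left-invariant $(0,1)$-forms, and a direct computation using the Maurer-Cartan equation and the Leibniz rule shows that under this identification $\bar\partial$ becomes the Chevalley-Eilenberg differential of $\bar{\mathfrak g}$ with coefficients in $C^\infty(M_\Gamma)$ (where $\bar{\mathfrak g}$ acts by left-invariant anti-holomorphic vector fields). Consequently
\[
H^q(M_\Gamma, \mathcal O_{M_\Gamma}) \;\cong\; H^q\bigl(\bar{\mathfrak g};\,C^\infty(M_\Gamma)\bigr).
\]
The right-translation action of the connected group $G$ on the finite-dimensional space $H^q(M_\Gamma,\mathcal O_{M_\Gamma})$ is automatically trivial, because each $g\in G$ can be connected to the identity by a smooth path of biholomorphisms of $M_\Gamma$, which induces the same map as the identity on Dolbeault cohomology. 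It remains only to identify $H^q(\bar{\mathfrak g}; C^\infty(M_\Gamma))$ with $H^q(\Gamma;\mathbb C)$ as complex vector spaces.

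For this last step I would appeal to the Gelfand-Piatetsky-Shapiro decomposition $L^2(\Gamma\backslash G) = \widehat{\bigoplus}\, m(\Gamma,\pi) H_\pi$, pass to smooth vectors, and (granting the usual continuity of cohomology with respect to the direct-sum decomposition) reduce to proving the irreducible comparison
\[
H^q(\bar{\mathfrak g};\,H_{\pi,\infty}) \;\cong\; H^q\bigl(\mathfrak g_{\mathbb R},K;\,H_{\pi,(K)}\bigr)
\]
for every irreducible unitary representation $\pi$ of $G$. Matsushima's formula (Theorem \ref{matsushima}) then gives the desired identification after summing over $\pi$. The key structural input is that for complex $G$ the complexification $\mathfrak g_{\mathbb R}\otimes\mathbb C$ splits as $\mathfrak g\oplus\bar{\mathfrak g}$ with $\mathfrak k_{\mathbb C}$ sitting as a graph inside this sum, so that the relative $(\mathfrak g_{\mathbb R},K)$-complex (built on $\Lambda^\bullet \mathfrak p_{\mathbb C}$) can be rewritten in terms of $\bar{\mathfrak g}$.

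I expect the main obstacle to be precisely this irreducible-level identification: rigorously commuting the infinite direct sum with the cohomology functor (via growth/convergence estimates on the $K$-isotypic components of the smooth vectors), and then proving the comparison between the Chevalley-Eilenberg cohomology $H^q(\bar{\mathfrak g};-)$ on the Dolbeault side and the relative cohomology $H^q(\mathfrak g_{\mathbb R},K;-)$ on the Matsushima side for each irreducible unitary representation of $G$. Once these are in hand, the preceding steps combine to yield the $G$-module isomorphism $H^{p,q}(M_\Gamma)\cong H^q(\Gamma;\mathbb C)\otimes\Lambda^p(\mathfrak g)$.
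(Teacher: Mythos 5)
First, a point of comparison: the paper does not prove this statement at all --- Theorem \ref{akhiezer} is quoted from Akhiezer \cite{akhiezer2} and used as a black box --- so there is no in-paper argument to measure yours against. Your opening reductions are sound and are indeed the standard set-up: left-invariant forms give a $G$-equivariant holomorphic trivialisation $\Omega^p_{M_\Gamma}\cong\mathcal O_{M_\Gamma}\otimes\Lambda^p(\mathfrak g^*)$, the Killing form identifies $\mathfrak g^*$ with $\mathfrak g$ equivariantly, and the $(0,q)$-Dolbeault complex becomes the Chevalley--Eilenberg complex of $\bar{\mathfrak g}$ with coefficients in $C^\infty(M_\Gamma)$.

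There is, however, a genuine error at the step where you dispose of the $G$-action on $H^q(M_\Gamma;\mathcal O_{M_\Gamma})$. The principle ``a connected group acting by biholomorphisms acts trivially on Dolbeault cohomology'' is false: homotopy invariance holds for de Rham cohomology, not for Dolbeault cohomology. The very theorem you are proving refutes it: taking $q=0$, the connected group $G$ acts on $H^{p,0}(M_\Gamma)\cong\Lambda^p(\mathfrak g)$ by the adjoint representation, which is nontrivial for $p\ge 1$, even though every $g\in G$ is joined to $e$ by a path of biholomorphisms. (Concretely, if $X$ generates the flow and $\omega$ is a $\bar\partial$-closed $(0,q)$-form, the type decomposition of Cartan's formula gives $L_X\omega=\bar\partial\bigl(\iota_{X^{0,1}}\omega\bigr)+\iota_{X^{1,0}}\partial\omega$, and the second term is $\bar\partial$-closed but not in general $\bar\partial$-exact.) So the triviality of the $G$-module $H^q(M_\Gamma;\mathcal O_{M_\Gamma})$ is part of the content of Akhiezer's theorem and requires representation-theoretic input; it is not a formal consequence of connectedness. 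Beyond this, the actual core of the theorem --- the identification $H^q\bigl(\bar{\mathfrak g};C^\infty(M_\Gamma)\bigr)\cong H^q(\Gamma;\mathbb C)$ --- is precisely what you defer: both the interchange of $H^q(\bar{\mathfrak g};-)$ with the Hilbert direct sum (the smooth vectors of a Hilbert sum are not the algebraic sum of the smooth vectors) and the proposed irreducible-level comparison with $(\mathfrak g_{\mathbb R},K)$-cohomology are left unproved, and the latter is not an off-the-shelf fact in the form you state it. As written, the proposal is a reasonable strategic outline rather than a proof. A cleaner route to the key isomorphism, which avoids the $L^2$-decomposition entirely, is the Cartan--Leray spectral sequence of the covering $G\to\Gamma\backslash G$: since $G$ is Stein, it yields $H^q(M_\Gamma;\mathcal O_{M_\Gamma})\cong H^q(\Gamma;\mathcal O(G))$, reducing everything to the structure of $\mathcal O(G)$ as a $\Gamma$-module.
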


Denote by $\pi:M_\Gamma\to X_\Gamma$ the projection of the principal $K$-bundle.  
We need to compute the integral 
cohomology groups in degrees $q=1,2$ of $M_\Gamma$.  Since $K$ is semisimple and simply connected, we have 
$H^q(K;\mathbb Z)=0$ for $q=1,2$.  Also $H^1(X_\Gamma;\mathbb Z)=\hom(\Gamma,\mathbb Z)=0$ when the rank of $G$ is at least $2$ and 
moreover, $H^2(X_\Gamma;\mathbb Z)$ is a finite group when rank of $G$ is at least $3$ by Theorem \ref{vanishingtheorem}.  Therefore 
$H^2(X_\Gamma;\mathbb Z)=\textrm{Ext}(H_1(X_\Gamma;\mathbb Z),\mathbb Z)\cong \textrm{Ext}(\Gamma/[\Gamma,\Gamma],\mathbb Z)
\cong \Gamma/[\Gamma,\Gamma]$.  
 Applying the Serre spectral sequence to $\pi:M_\Gamma\to X_\Gamma$, we have $E_2^{p,q}=H^p(X_\Gamma;\mathcal H^q(K;\mathbb Z))=0$ when $q=1,2, p\ge 0$, and, $E_2^{1,0}=H^1(X_\Gamma;\mathbb Z)\cong \mathbb Z^r$ where $r=0$ when $\rank(G)\ge 2$.   It follows that 
$H^q(M_\Gamma; \mathbb Z)\cong H^q(X_\Gamma; \mathbb Z)$ when $q=1,2$.  The same conclusion holds when the coefficient group 
is $\mathbb C$.  Summarising, we have proved

\begin{proposition} \label{singularcohomologyofmgamma}
If $\rank(G)\ge 2$, then $H^1(M_\Gamma;\mathbb Z)=0$ and $H^2(M_\Gamma;\mathbb C)\cong H^2(\Gamma;\mathbb C)$.  If $\rank(G)\ge 3$, then $H^2(M_\Gamma;\mathbb Z)\cong 
\textrm{Ext}(H_1(M_\Gamma;\mathbb Z);\mathbb Z)$ which is isomorphic to the finite group $ \Gamma/[\Gamma,\Gamma]$. \hfill $\Box$  
\end{proposition}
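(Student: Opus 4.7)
The plan is to exploit the principal $K$-bundle $\pi:M_\Gamma\to X_\Gamma$ via the Serre spectral sequence, and combine this with the vanishing results already in hand (Corollary \ref{cor-vanishing}) together with the universal coefficient theorem.

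First I would record that, since $G$ is simply connected complex semisimple, its maximal compact subgroup $K$ is compact, connected, semisimple and simply connected; hence $H^1(K;\mathbb Z)=0$, and the standard fact that $\pi_2$ of any Lie group vanishes gives $H_2(K;\mathbb Z)=0$, so also $H^2(K;\mathbb Z)=0$. Feed this into the Serre spectral sequence of $\pi:M_\Gamma\to X_\Gamma$, whose $E_2$-page is $E_2^{p,q}=H^p(X_\Gamma;H^q(K;\mathbb Z))$ (local coefficients are trivial because $K$ is connected and acts on its own cohomology by inner automorphisms). The rows $q=1,2$ vanish identically, so there is no nontrivial differential into or out of the total degrees $q=0$ contributions in the range under consideration, and the edge map gives isomorphisms
\[
H^q(X_\Gamma;\mathbb Z)\xrightarrow{\ \pi^*\ } H^q(M_\Gamma;\mathbb Z),\qquad q=1,2.
\]
The identical argument with $\mathbb C$ coefficients yields $H^q(M_\Gamma;\mathbb C)\cong H^q(X_\Gamma;\mathbb C)=H^q(\Gamma;\mathbb C)$ for $q=1,2$, since $X_\Gamma=K(\Gamma,1)$.

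Next I would translate the vanishing results into integral statements. Assume $\rank(G)\ge 2$. Corollary \ref{cor-vanishing} gives $H^1(\Gamma;\mathbb C)=0$, so $\Gamma/[\Gamma,\Gamma]=H_1(\Gamma;\mathbb Z)$ has no free part, i.e.\ it is finite, and hence $H^1(X_\Gamma;\mathbb Z)=\hom(\Gamma,\mathbb Z)=0$. Combined with the Serre spectral sequence conclusion, this gives $H^1(M_\Gamma;\mathbb Z)=0$. The $\mathbb C$-coefficient claim $H^2(M_\Gamma;\mathbb C)\cong H^2(\Gamma;\mathbb C)$ is then just the $q=2$ case of the previous paragraph.

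Finally, under the stronger hypothesis $\rank(G)\ge 3$, Corollary \ref{cor-vanishing} also provides $H^2(\Gamma;\mathbb C)=0$, so $H_2(X_\Gamma;\mathbb Z)$ is torsion. The universal coefficient theorem then gives
\[
H^2(X_\Gamma;\mathbb Z)\cong \textrm{Ext}(H_1(X_\Gamma;\mathbb Z),\mathbb Z)\cong \textrm{Ext}(\Gamma/[\Gamma,\Gamma],\mathbb Z)\cong \Gamma/[\Gamma,\Gamma],
\]
the last isomorphism because $\Gamma/[\Gamma,\Gamma]$ is a finite abelian group (for which $\textrm{Ext}(-,\mathbb Z)$ is non-canonically isomorphic to the group itself). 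Pulling back along the isomorphism $H^2(X_\Gamma;\mathbb Z)\cong H^2(M_\Gamma;\mathbb Z)$ and using that $H_1(M_\Gamma;\mathbb Z)\cong H_1(X_\Gamma;\mathbb Z)$ by Hurewicz and the first part, we obtain the claimed description of $H^2(M_\Gamma;\mathbb Z)$.

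There is no real obstacle here; the only point requiring a moment of care is the verification that the Serre spectral sequence has trivial coefficients (handled by connectedness of $K$) and the observation that $K$ being simply connected compact Lie forces $H^1(K;\mathbb Z)=H^2(K;\mathbb Z)=0$, after which the argument is mechanical.
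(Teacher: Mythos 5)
Your proof is correct and follows essentially the same route as the paper: the Serre spectral sequence of the principal $K$-bundle $M_\Gamma\to X_\Gamma$ with the rows $q=1,2$ killed by $H^1(K;\mathbb Z)=H^2(K;\mathbb Z)=0$, the vanishing results of Corollary \ref{cor-vanishing} to handle $H^1$ and to make $H_2(X_\Gamma;\mathbb Z)$ torsion, and the universal coefficient theorem to identify $H^2$ with $\textrm{Ext}(\Gamma/[\Gamma,\Gamma],\mathbb Z)\cong\Gamma/[\Gamma,\Gamma]$. The only cosmetic difference is that you justify simplicity of the local system by connectedness of the structure group, where the paper gives an explicit path-lifting argument (its Remark \ref{simplecoeffts}); both are fine.
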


\begin{remark}\label{simplecoeffts}
Let $G$ be a connected semisimple Lie group, not necessarily complex, and $\Gamma \subset G$ any torsionless 
lattice.  Consider the principal $K$-bundle with projection $M_\Gamma\to X_\Gamma$.  

{\em \underline{Claim:} The local coefficient system $\mathcal H^q(K;\mathbb Z)$ over $X_\Gamma=\Gamma\backslash G$ is simple, i.e., $\Gamma$ acts trivially on the cohomology of the fibre. }\\
This is true for the $K$-bundle $M_\Gamma\to X_\Gamma$ for any $G$ connected.  
Indeed let $\gamma\in \Gamma=\pi_1(X_\Gamma)$.  The element $\bar e:=\Gamma.K\in X_\Gamma$ is understood to be the base point.  We identify $k\in K$ with $\Gamma.k\in M_\Gamma$, the fibre over $\bar e$.  Choose a path $I\to G, t\mapsto \gamma_t$, that joins the identity element $e\in G$ to $\gamma\in \Gamma$.  
\[
\begin{array}{ccl}
0\times K&\stackrel{\Phi_0}{\longrightarrow} & M_\Gamma\\
\downarrow & \Phi\nearrow &\downarrow \pi\\
I\times K & \stackrel{\phi}{\to} & X_\Gamma\\
\end{array}
\]

Then $t\mapsto 
\Gamma\gamma_tK$ is a loop that represents $\gamma\in \pi_1(X_\Gamma)$.  The action of $\gamma\in \Gamma$ on 
$H^*(K;\mathbb C)$ is induced by $\Phi_1:K\cong 1\times K \to \pi^{-1}(\bar e)\cong K$ where 
$\Phi:I\times K\to M_\Gamma$ is a  lift of $\phi:I\times K\to 
X_\Gamma$ defined as $(t,k)\mapsto \Gamma \gamma_t K$ such that $\Phi(0,k)=\Gamma k~\forall k\in K$.   
Evidently $(t,k)\mapsto \Gamma \gamma_tk\in M_\Gamma$. We note that $\Phi_1(k)=\Gamma \gamma k=\Gamma k$ corresponds to the identity map of $K$. This proves our claim.
\end{remark}

We shall now prove Theorem \ref{picardgroup}.

{\it Proof of Theorem \ref{picardgroup}:} (i) Assume that the rank of $G$ is at least $2$.
Consider the long exact sequence induced by the exponential sequence of sheaves, where we have replaced 
$H^1(M_\Gamma;\mathcal O^*_{M_\Gamma})$ by $Pic(M_\Gamma)$:
\begin{equation}
\to H^1(M_\Gamma;\mathbb Z)\stackrel{i_*}{\to} H^1(M_\Gamma;\mathcal O_{M_\Gamma})\stackrel{\varepsilon_*}{\to} \Pic(M_\Gamma)\stackrel{c_1}{\to} H^2(M_\Gamma;\mathbb Z)\to H^2(M_\Gamma;\mathcal O_{M_\Gamma})\to \cdots
\end{equation}
where $c_1$ is the Chern class map.  

We have $H^1(M_\Gamma;\mathbb Z)=0=H^1(M_\Gamma;\mathcal O_{M_\Gamma})$ and so it follows that $Pic^0(M_\Gamma)=0$.  

When $\rank (G)\ge 3$, we have $H^2(M_\Gamma;\mathcal{O}_{M_{\Gamma}})\cong H^2(\Gamma;\mathbb C)\cong 0$ where the first isomorphism is by Theorem \ref{akhiezer} and  the second by Theorem \ref{vanishingtheorem}. Proposition \ref{singularcohomologyofmgamma} implies
$H^2(M_\Gamma;\mathbb Z)\cong \Gamma/[\Gamma,\Gamma]$ is a finite group and so our assertion follows. 

In the rank $1$ case, namely when $G=SL(2,\mathbb C),$ we have $K=SU(2)\cong \mathbb S^3$ and $X$ is the three-dimensional real hyperbolic space.    Let $H^1(\Gamma;\mathbb Z)\cong \mathbb Z^r$.  By the Poincar\'e duality,  we have $H^2(\Gamma;\mathbb Z)\cong \mathbb Z^r\oplus A$ where $A$ is isomorphic to the torsion 
subgroup of $H_1(\Gamma;\mathbb Z)=\Gamma/[\Gamma,\Gamma]$.   Then $H^1(M_\Gamma;\mathcal O_{M_\Gamma})\cong \mathbb C^r, 
H^2(M_\Gamma;\mathcal O_{M_\Gamma})\cong 
H^2(\Gamma;\mathbb C)\cong \mathbb C^r$, by 
Theorem \ref{akhiezer}. Using the Serre spectral sequence we get that $H^q(M_\Gamma;\mathbb Z)\cong H^q(\Gamma;\mathbb Z), q=1,2.$ 
It follows from (2) that $\Pic^0(M_\Gamma)\cong \mathbb C^r/\Lambda$ where $\Lambda=H^1(M_\Gamma;\mathbb Z)\cong \mathbb Z^r$. 
\hfill $\Box$

When $\rank(G)\leq 2$, in order to determine $Pic(M_\Gamma)$ it remains to compute the 
image of the Chern class map $H^1(M_\Gamma;\mathcal {O}_{M_{\Gamma}}^{*})\to H^2(M_\Gamma;\mathbb Z).$   Equivalently, 
we need only to determine the kernel of $H^2(M_\Gamma;\mathbb Z) \to H^2(M_\Gamma,\mathcal{O}_{M_{\Gamma}})$.   
We have the following diagram
\[
\begin{array}{ccc}  
H^2(\Gamma;\mathbb C)&\stackrel{\psi}{\to}& H^2(M_\Gamma;\mathcal O_{M_\Gamma})\\

\pi^* \downarrow 
&&\downarrow id\\

H^2(M_\Gamma;\mathbb C)&\stackrel{\iota_*}\to &H^2(M_\Gamma;\mathcal O_{M_\Gamma})\\
\end{array}
\]
in which $\psi$ is the Akhiezer isomorphism of Theorem \ref{akhiezer}, $\iota:\underline{\mathbb C}\to \mathcal O_{M_\Gamma}$ is the inclusion 
where $\underline{\mathbb C}$ denotes the constant sheaf over $X$.  The vertical map $\pi^*$ is induced by 
the projection of the $SU(2)$-principal bundle, which is an isomorphism (in degree $2$).  It is plausible 
that the above diagram commutes and so $\iota_*$ is an isomorphism, but it appears to be hard to establish. 

When $\rank(G)=2$, we have $H^{1,1}(M_\Gamma)=0$ by Theorem \ref{akhiezer}.

A result of Dolbeault \cite[Th\'eor\`eme 2.3]{dolbeault} establishes the Lefschetz theorem on $(1,1)$-classes for {\it non-K\"ahler} manifolds. 
Accordingly, an element $c\in H^2(M_\Gamma;\mathbb Z)$ is the first Chern class of a holomorphic line bundle over $M_\Gamma$ if and only if it is represented by a 
form of type $(1,1)$, say $\omega.$  
However, as $M_\Gamma$ is not K\"ahler, it does not seem to follow that $c$, regarded as an element of $H^2(M_{\Gamma};\mathbb R)$, vanishes although $[\omega]\in H^{1,1}(M_\Gamma)=0$.

\begin{remark} \label{remarksonmaintheorem1}
(i) 
When $\Gamma\subset G$ is as in Theorem \ref{picardgroup} and rank of $G$ is at least two, the abelianisation $\Gamma/[\Gamma,\Gamma]\cong H_1(M_\Gamma;\mathbb Z)$ is finite.   The torsion subgroup $A$ of $H^2(M_\Gamma;\mathbb Z)$ is isomorphic to 
$\textrm{Ext}(H_1(M_\Gamma;\mathbb Z),\mathbb Z) 
\cong \textrm{Ext}(\Gamma/[\Gamma,\Gamma],\mathbb Z)\cong \Gamma/[\Gamma,\Gamma]$ since the abelianisation of $\Gamma$ is finite.  In particular,  
$A$ is zero when $\Gamma$ is perfect. Moreover if $\rank(G)\geq 3$ and $\Gamma$ is perfect, then $Pic^{0}(M_\Gamma)=Pic(M_{\Gamma})=0$.

(ii) 
If $\chi : \Gamma\to \mathbb C^*$ is any homomorphism, we obtain a holomorphic line bundle $L_\chi$ with total space $G\times_\Gamma\mathbb C$.   When $\rank(G)\ge 2$, the image of $\chi$ is necessarily a finite cyclic subgroup $\mu_m\subset \mathbb C^*$.  The homomorphism $\chi_m: \Gamma/[\Gamma,\Gamma]\to \mu_m$ defined by $\chi$ determines an element $\alpha$ of $H^2(\Gamma;\mathbb Z)=\textrm{Ext}(\Gamma,\mathbb Z)$ corresponding to the image of the generator $\textrm{Ext}(\mu_m;\mathbb Z)\cong \mathbb \mu_m$ under 
$\chi^*_m: H^2(\mu_m;\mathbb Z)=\textrm{Ext}(\mu_m;\mathbb Z)\to \textrm{Ext}(\Gamma/[\Gamma,\Gamma],\mathbb Z)\hookrightarrow H^2(\Gamma;\mathbb Z)$.    Then,  under the isomorphism  $ \pi^{*} : H^{2}(\Gamma;\mathbb{Z})\cong H^{2}(M_{\Gamma};\mathbb{Z})$, (via the Serre spectral sequence)  $c_1(L_\chi)=\pi^{*}(\alpha)$.

(iii) The computation of Betti numbers $b_j$ of $M_\Gamma=\Gamma\backslash SL(2, \mathbb C)$ for a torsionless uniform lattice can be 
completed using the Serre spectral sequence  $E^{p,q}_r$ of the principal $SU(2)$-bundle over the hyperbolic manifold $
X_\Gamma$.  As we saw in the above proof,  $b_1=b_2=\rank( \Gamma/[\Gamma,\Gamma])$.  Since the group local coefficient system $\mathcal H^3(SU(2);\mathbb C)$ over $X_\Gamma$ is trivial (by 
Remark \ref{simplecoeffts}),  we have $E_2^{0,3}=E_\infty^{0,3}\cong H^3(SU(2);\mathbb C)\cong \mathbb C$. Moreover, $E_\infty^{3,0}=E_2^{3,0}=H^3(\Gamma;\mathbb C)=\mathbb C$
as $X_\Gamma$ is a compact connected $3$-manifold.  So $b_3=2$. By Poincar\'e duality (applied to $M_\Gamma$)  we have 
$b_4=b_2=r=b_1=b_5$.  Of course $b_0=b_6=1, b_j=0 $ for $j>6$.  This was already observed by Akhiezer \cite{akhiezer2}.  
\end{remark}

\section{Picard groups of $M_\Gamma$-bundles}

In this section we prove the main result of this paper, namely, Theorem \ref{bundleversionofPic}. 

Suppose that $Y$ is a compact connected complex manifold.  Let $P\to Y$ be a holomorphic principal $G$-bundle where $G$ is a simply connected complex linear semisimple Lie group.   We assume that $G$ acts on the left of $P$. 
 Let $\Gamma$ be a torsionless irreducible and uniform lattice in $G$.  Let $P_\Gamma=\Gamma \backslash P$.   Then we have the natural projection 
$P_\Gamma\to Y$ of a holomorphic $M_\Gamma=\Gamma\backslash G$-bundle over $Y$ with structure group $G$. 
The proof will involve computing $H^q(P_\Gamma;\mathcal O_{P_\Gamma})$ for $q=1,2$ 
using the Borel spectral sequence \cite[Appendix II, \S2]{hirzebruch}.   
\subsection{Borel spectral sequence}\label{borelss}
The hypotheses of the Borel spectral 
sequence include the requirement that the connected components of the 
structure group of the holomorphic bundle act trivially on the $\bar\partial$-cohomology of the fibre.  This is 
(in general) not true for $M_\Gamma$-bundles.  However, one can still use the spectral sequence 
for the computation of $H^*(P_\Gamma;\mathcal O_{P_{\Gamma}})$, as we shall now explain.

More generally, let 
$\xi=(E,B,F,\pi)$ be a holomorphic fibre bundle with structure group $G$, which is a complex Lie group. Suppose that the fibre space $F$ is compact and that 
$E,B,F$ are connected.  Assume that the connected components of $G$ act trivially on $H^*(F;\mathcal O_F)$.  
Then one has a {\it holomorphic} vector bundle $\mathcal H^q(F;\mathcal O_F)$ over $B$ whose fibre over 
$b\in B$ is the complex vector space 
$H^{q}(F_b;\mathcal O_{F_b})\cong H^q(F;\mathcal O_F)$ where $F_b=\pi^{-1}(b)$.  
 Let $W$ be a holomorphic vector bundle over $B$ and let $\hat W:=\pi^*(W)$. 
Then one has a spectral sequence 
${}^{0,q}E_r^{s,t}={}^qE_r^{s,t}$ in which the differential $d_r:{}^qE_r^{s,t}\to {}^{q+1}E_r^{s+r,t-r+1}$ has bidegree $(r,1-r)$.   We have 
${}^qE^{s,t}_r=0$ unless $t=q-s$;
$s$ is the `base degree', $t$ is the `fibre degree'.   The $E_2$-page is given as ${}^qE_2^{s,t}=H^s(B; W\otimes \mathcal{H}^{t}(F;\mathcal O_F))$.  The spectral sequence converges to $H^{0,q}(E;\hat{W})=H^q(E;\mathcal O(\hat{W}))$.  
That is, for each $q\ge 0$, there exists a filtraion of $H^q(E;\hat{W})$ such that the associated graded space is:
\[ Gr H^q(E;\mathcal O(\hat{W}))=\sum_{0\le s\le q} {}^qE_\infty^{s,q-s}.\]

The proof is exactly as given in \cite[Appendix II]{hirzebruch}, where we need only replace the filtration $L_k$ (in \S4 therein) by $L_k\cap A^{0,*}_E(\hat W)$.   For the sake 
of completeness we indicate below the crucial place where the change is necessitated.   

Define 
$L_k(U)\subset A_U(\hat{ W}|U)$, for a small open set $U\subset E$, to be the span 
of monomials $d\bar z_J\wedge d\bar y_{J'}$ in which $|J|+|J'|\ge k$.  Thus $L_k(U)\subset \sum_{q\ge k}A^{0,q}(U)$.  
Here, the smallness refers to both $\xi$ and $W$ being trivial over $\pi(U)=:V$ and $U\cong V\times V'$ (via a local  analytic chart of $\xi$),  where $(V,z_i), (V',y_j)$ are holomorphic coordinate charts in $B,F$ respectively. 
Define 
\[L_k:=\{\omega\in A_E(\hat{ W})\mid \omega|_U\in L_k(U) ~\forall \textrm{~small open subset~}U\subset E\}.\]
Then 
\[L_0=A_E(\hat{W}), L_k=0 ~\textrm{~if~} k>\dim_\mathbb R B; L_k\supset L_{k+1}; \bar\partial L_k\subset L_k~\forall k\ge 0. \]
Also  $L_k=\sum_{q\ge 0} {}^{0,q}L_k$ where ${}^{0,q}L_k=L_k\cap A_E^{0,q}(\hat{W})$. 
Note that $H^*(E;\mathcal O_E(\hat W))$ is the cohomology of the cochain complex $(A^{0,*}_E(\hat{W}),\bar
\partial)$.  
The required spectral sequence is associated to the filtration $\{(L_k,\bar\partial)\}$ of the 
differential graded complex $(A^{0,q}_E(\hat{ W}),\bar\partial)$.     The proof 
is exactly as given by Borel in \cite{hirzebruch}.
\subsection{Proof of Theorem \ref{bundleversionofPic}}
Reverting back to the $M_\Gamma$-bundle with projection $P_\Gamma\to Y$, 
first, observe that the structure group 
$G$ of the $M_\Gamma$-bundle with projection $P_\Gamma\to Y$ acts trivially 
on $H^q(M_\Gamma;\mathcal O_{M_\Gamma})\cong H^q(X_\Gamma;\mathbb C)$ by Theorem \ref{akhiezer}.   So 
the Borel spectral sequence 
is applicable for the $M_\Gamma$-bundle for computing $H^q(P_\Gamma;\mathcal O_{P_\Gamma})$.   
The $E_2$-page of the spectral sequence (with $W$ being the trivial bundle)
is given by:  ${}^{0,q}E_2^{s,t}=H^{s}(Y;\mathcal H^{0, q-s}(M_\Gamma))$.   Here $\mathcal H^{0,q-s}(M_\Gamma)$ denotes the holomorphic vector bundle over $Y$ with discrete structure group.  Since 
$G$ is connected, the structure group of the vector bundle $\mathcal H^{0, t}(M_\Gamma)$ reduces to the trivial group and so 
\begin{equation}
{}^{0,q}E^{s,t}_2=H^s(Y;\mathcal O_Y)\otimes H^t(M_\Gamma;\mathcal O_{M_\Gamma})
\end{equation}   
We shall suppress the type $(0,q)$ in the notation ${}^{0,q}E_r^{s,t}$.  
Our main interest is in computing $H^q(P_\Gamma;\mathcal O_{P_\Gamma})$ when $q=1,2$.   

We will assume that $\rank(G) \ge 2$, so that $H^{0,1}(M_\Gamma)=0$ by Theorems \ref{vanishingtheorem} and \ref{akhiezer}.  
Substituting in (3), we obtain 
$E_2^{s,1}=0~\forall s\ge 0$.  We have $E_2^{0,q}\cong 
H^q(M_\Gamma;\mathcal O_{M_\Gamma}),q\ge 2;  E_2^{s,0}\cong H^s(Y; \mathcal O_Y)~\forall s\ge 0$. 
  
It follows that $H^1(P_\Gamma;\mathcal O_{P_\Gamma}) \cong H^1(Y;\mathcal O_Y)$, the isomorphism being induced by $\pi$.    Also, 
 $H^2(P_\Gamma;\mathcal O_{P_\Gamma})\cong H^2(Y;\mathcal O_Y)\oplus E_\infty ^{0,2}=H^2(Y;\mathcal O_Y)\oplus E_4^{0,2}$.    We note that 
$E_4^{0,2}=E_2^{0,2}=H^2(M_\Gamma;\mathcal O_{M_\Gamma})=0$ if $\rank(G)>2$.  Summarising we have 

\begin{lemma} \label{h2pgamma}
We keep the above notations. Suppose that $\rank(G)\ge 2$.  Then $H^1(P_\Gamma;\mathcal O_{P_\Gamma})\cong H^1(Y;\mathcal O_Y)$ and 
$H^2(P_\Gamma;\mathcal O_{P_\Gamma})\cong H^2(Y;\mathcal O_{Y})\oplus V$ for some suitable vector subspace $V\subset 
H^2(M_\Gamma;\mathcal O_ {M_\Gamma})$. If $\rank(G)\ge 3$, then $V=0$. \hfill $\Box$
\end{lemma}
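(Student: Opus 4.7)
The plan is to apply the Borel spectral sequence set up in \S\ref{borelss} to the holomorphic $M_\Gamma$-bundle $\pi : P_\Gamma \to Y$, taking the coefficient bundle $W$ to be trivial. The key input is Akhiezer's Theorem \ref{akhiezer}: it gives $H^q(M_\Gamma; \mathcal O_{M_\Gamma}) \cong H^q(\Gamma; \mathbb C)$ as $G$-modules, with trivial $G$-action, so the hypothesis that the connected components of $G$ act trivially on the $\bar\partial$-cohomology of the fibre is satisfied and the $E_2$-page factorizes as ${}^{0,q}E_2^{s,t} = H^s(Y; \mathcal O_Y) \otimes H^t(M_\Gamma; \mathcal O_{M_\Gamma})$ per equation~(3).

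For the first assertion, I would feed in the rank hypothesis: Corollary \ref{cor-vanishing} gives $H^1(\Gamma; \mathbb C) = 0$ when $\rank(G) \ge 2$, and then Akhiezer forces $H^1(M_\Gamma; \mathcal O_{M_\Gamma}) = 0$. Consequently ${}E_2^{s,1} = 0$ for all $s$. In total degree $1$ the only surviving piece is ${}E_\infty^{1,0}$; the outgoing differentials $d_r : E_r^{1,0} \to E_r^{1+r,\,1-r}$ land in fibre degree $1-r \le -1$ for $r \ge 2$, so ${}E_\infty^{1,0} = {}E_2^{1,0} = H^1(Y; \mathcal O_Y)$, and this identification is manifestly induced by $\pi^*$.

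For the second assertion, the three pieces contributing to total degree $2$ are ${}E_\infty^{s,\,2-s}$ for $s = 0, 1, 2$. The middle piece vanishes, since ${}E_2^{1,1} = H^1(Y; \mathcal O_Y) \otimes H^1(M_\Gamma; \mathcal O_{M_\Gamma}) = 0$. The term ${}E_\infty^{2,0}$ equals ${}E_2^{2,0} = H^2(Y; \mathcal O_Y)$, because all $d_r$ out of position $(2,0)$ land in negative fibre degree. Setting $V := {}E_\infty^{0,2}$, this is by construction a subspace of ${}E_2^{0,2} = H^2(M_\Gamma; \mathcal O_{M_\Gamma})$, namely the intersection of the kernels of the outgoing differentials $d_r : E_r^{0,2} \to E_r^{r,\,3-r}$ for $r \ge 2$ (only $d_2$ and $d_3$ are potentially nonzero, so $V = E_4^{0,2}$). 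Because everything is in the category of complex vector spaces, the finite filtration of $H^2(P_\Gamma; \mathcal O_{P_\Gamma})$ with graded pieces $H^2(Y;\mathcal O_Y),\ 0,\ V$ splits, yielding $H^2(P_\Gamma; \mathcal O_{P_\Gamma}) \cong H^2(Y; \mathcal O_Y) \oplus V$ as claimed.

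When $\rank(G) \ge 3$, Corollary \ref{cor-vanishing} gives $H^2(\Gamma; \mathbb C) = 0$ and Akhiezer's theorem then yields $H^2(M_\Gamma; \mathcal O_{M_\Gamma}) = 0$, so $V = 0$. No step poses a genuine obstacle; the argument is essentially diagram chasing on the $E_2$-page. The one point that requires preparatory care is the availability of the Borel spectral sequence at all in this non-K\"ahler setting, but that has already been handled in \S\ref{borelss} via the modified filtration $L_k \cap A_E^{0,*}(\hat W)$.
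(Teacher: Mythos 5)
Your argument is correct and follows essentially the same route as the paper: the Borel spectral sequence with trivial coefficient bundle, Akhiezer's theorem to justify its applicability and to identify the fibre cohomology, and Corollary \ref{cor-vanishing} to kill $E_2^{s,1}$ (and, for $\rank(G)\ge 3$, the term $E_2^{0,2}$), leading to $V=E_\infty^{0,2}=E_4^{0,2}\subset H^2(M_\Gamma;\mathcal O_{M_\Gamma})$ exactly as in the paper's proof.
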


We now turn to the computation of $H^q(P_\Gamma;\mathbb Z)$ for $q=1,2$.   Using the homotopy exact sequence of the $M_\Gamma$-bundle 
with projection $\pi: P_\Gamma\to Y$, we see that 
$\pi_*: \pi_1(P_\Gamma) \to \pi_1(Y)$ is surjective.  Therefore $H^1(Y;\mathbb Z)\to H^1(P_\Gamma;\mathbb Z)$ is injective.   We claim 
that it is an isomorphism.  
Using the Serre spectral sequence 
and the fact $H^1(M_\Gamma;\mathbb Z)=0$ (as $H_1(M_\Gamma;\mathbb Z)=\Gamma/[\Gamma,\Gamma]$ is finite), we see that $\pi^*:H^1(Y;\mathbb Z)\to H^1(P_\Gamma;\mathbb Z)$  is an isomorphism.

Next we turn to computation of $H^2(P_\Gamma;\mathbb Z)$.  
Assume that $\rank(G)\ge 3$ so that $H^2(\Gamma;\mathbb Z)$ is finite by Theorem \ref{vanishingtheorem}.  Since $G$ is a {\it complex} semisimple  simply connected Lie group, $K$ is also semisimple and simply connected.   Hence $H^q(K;\mathbb Z)=0$ for $q=1,2$.
 Using the Serre spectral sequence for the principal $K$-bundle with projection $M_\Gamma\to X_\Gamma$, we obtain that $H^2(M_\Gamma;\mathbb Z)\cong H^2(\Gamma;\mathbb Z)$ is finite. 
Hence, in the Serre spectral sequence for the $M_\Gamma$-bundle, we have $E^{0,2}_2=H^0(Y;\mathcal H^2(M_\Gamma;\mathbb Z))$ is finite.  
This, together with the vanishing of $H^1(M_\Gamma;\mathbb Z)$, implies that  
that $\pi^*:H^2(Y;\mathbb Z)\to H^2(P_\Gamma;\mathbb Z)$ is injective and has finite cokernel.

We shall now prove Theorem \ref{bundleversionofPic}.

{\it Proof of Theorem \ref{bundleversionofPic}.}  Suppose that $\rank(G)\ge 2$.   We have a commuting diagram induced by the exponential exact sequence, where 
$\pi:P_\Gamma\to Y$ is the projection of the $M_\Gamma$-bundle.  
\begin{equation}
\begin{array}{ccccccccc}
 H^1(Y;\mathbb Z)&\to& H^1(Y;{\mathcal {O}}_{Y})&\to &\Pic(Y)&\stackrel{c_1}{\to} &H^2(Y;\mathbb Z)&\to&H^2(Y;{\mathcal {O}}_{Y}) \\
\pi^* \downarrow &                    &\pi^* \downarrow &&\pi^* \downarrow                               && \pi^* \downarrow &&\downarrow \pi^*\\
 H^1(P_\Gamma;\mathbb Z)&\to & H^1(P_\Gamma;{\mathcal {O}}_{P_{\Gamma}})&\to & \Pic(P_\Gamma)&\stackrel{c_1}{\to} &H^2(P_\Gamma;\mathbb Z)&\to &H^2(P_\Gamma;{\mathcal {O}}_{P_{\Gamma}})\\
\end{array}
\end{equation}
As noted above, $\pi^*:H^1(Y;\mathbb Z)\to H^1(P_\Gamma;\mathbb Z)$ is an isomorphism.  
By Lemma \ref{h2pgamma}, $\pi^*:H^q(Y;\mathcal O_Y)\to H^q(P_\Gamma;\mathcal O_{P_\Gamma})$ is an isomorphism when $q=1$ as $
\rank(G)\ge 2$.  Denoting the inclusion of sheaves $\mathbb Z\hookrightarrow \mathcal O_Y$ by $i_Y$ and a similar notation for $P_\Gamma$, it follows that the natural homomorphism 
$\Pic^0(Y)=H^1(Y;\mathcal O_Y)/Im(i_{Y,*})\to H^1(P_\Gamma;\mathcal O_{P_\Gamma})/Im(i_{P_\Gamma,*})
\cong \Pic^0(P_\Gamma)$ is surjective.  The commutativity of the left-most square in (4) and the fact that the $\pi^*$
in that square are isomorphisms implies that $Pic^0(Y)\to Pic^0(P_\Gamma)$ is an isomorphism.  

Now let $\rank(G)\ge 3$.  It remains to show that $\pi^*(Pic(Y))\subset Pic(P_\Gamma)$ has finite index.   Suppose that $L$ is a holomorphic line bundle 
over $P_\Gamma$.  If $c_1(L)=0$, then $L\in Pic^0(P_\Gamma)$ and so $L=\pi^*(L')$ for some line bundle $L'$ over $Y$ as $\pi^*:Pic^0(Y)\to Pic^0(P_\Gamma)$ is an isomorphism.     
Now suppose that $c_1(L)=a\ne 0$.  Choose $m\ge 1$ to be 
the cardinality of the cokernel of $\pi^*: H^2(Y;\mathbb Z)\to H^2(P_\Gamma;\mathbb Z)$.   It suffices to show that $L^m$ is in the image of 
$\pi^*:Pic(Y)\to Pic(P_\Gamma)$.  
We have $ma=\pi^*(b)$ for some $b\in H^2(Y;\mathbb Z)$.   By the commutativity of the right-most square in (4) and the observation 
that $\pi^*:H^2(Y;\mathcal O_Y)\to H^2(P_\Gamma;\mathcal O_{P_\Gamma})$ is an isomorphism (by Lemma \ref{h2pgamma}), 
we see that $b$ is in the kernel of $H^2(Y;\mathbb Z)\to H^2(Y;\mathcal O_Y)$.  Therefore 
$b=c_1(L_1)$ for some line bundle $L_1$ over $Y$.  Hence $L^{-m}\pi^*(L_1)\in Pic^0(P_\Gamma)$.   Choose $L_0\in Pic^0(Y)$ such that $L^{-m}\pi^*(L_1)
=\pi^*(L_0)$. Then $L^m=\pi^*(L_0^{-1}L_1)$.  
   \hfill $\Box$
\begin{example}  {\em 
 Let $\omega$ be any vector bundle of rank $k$ over a compact connected complex manifold $Y$.  Let $\xi=\Lambda^k(\omega)^*$.  
Then we claim that $\eta:=\omega\oplus \xi$ admits a reduction of the structure group to $SL(k+1,\mathbb C)$.  This is because 
$\Lambda^{k+1}(\eta)=\Lambda^{k+1}(\omega\oplus \xi)=\Lambda^k(\omega) \otimes \xi=\varepsilon$, the trivial line bundle.   We take 
$P\to Y$ to be the associated principal  $SL(k+1,\mathbb C)$-bundle.  For any uniform irreducible torsionless lattice we obtain a holomorphic 
bundle $P_\Gamma\to Y$ with fibre $SL(k+1,\mathbb C)/\Gamma. $}
  \end{example}
\begin{remark} Let $\rank(G)\ge 3$.  Suppose that $Y$ (as in Theorem \ref{bundleversionofPic}) is simply connected. Then $\pi_1(M_\Gamma)\cong\Gamma\to \pi_1(P_\Gamma)$ is a surjection.  Also $\pi^*: H^2(Y;\mathbb Z)\to H^2(P_\Gamma;\mathbb Z)$ is a monomorphism as can be seen using the Serre spectral sequence. 
Suppose that $H^2(Y;\mathbb Z)\to H^2(Y;\mathcal O_Y)$ is the trivial homomorphism so that $c_1:Pic(Y)\to H^2(Y;\mathbb Z)$ is surjective.  (For example, take $Y$ such that $H^2(Y;\mathcal O_Y)=0$.)  Let $C$ denote the subgroup of $Pic(M_\Gamma)$ defined as the image of 
the restriction to a fibre $Pic(P_\Gamma)\to Pic(M_\Gamma)$.  Since $\rank(G)\ge 3$, we have $Pic(M_\Gamma)\cong \Gamma/[\Gamma,\Gamma]=A$, which is a finite abelian group.  Finally, let $C_1$ denote the image of $c_1:Pic(P_\Gamma)\to H^2(P_\Gamma;\mathbb Z)$. 

Then we have 
the following commuting diagram
\[\begin{array}{ccccc}
Pic^0(Y)&\stackrel{\cong}{\to} & Pic^0(P_\Gamma)&&\\
\downarrow && \downarrow &&\\
Pic(Y)&\to &Pic(P_\Gamma)& \to &Pic(M_\Gamma)\\
\downarrow && \downarrow &&\downarrow  \cong\\
H^2(Y;\mathbb Z)&\to &C_1&\to& A\\
\end{array}\]
An easy diagram chase reveals that we have an exact sequence \[1\to Pic(Y)\to Pic(P_\Gamma)\to C\to 1.\]   
If $\Gamma$ is perfect, then $Pic(M_\Gamma)=0$ and we have $Pic(Y)\cong Pic(P_\Gamma)$.  
\end{remark}
\noindent
{\bf Acknowledgments}  We thank V. Balaji and D. S. Nagaraj for their valuable comments. 
We thank the referee for his/her comments which helped us to improve the exposition. 

\end{document}